\newtheorem{theorem}{Theorem}[section]
\newtheorem{proposition}[theorem]{Proposition}
\theoremstyle{definition}
\newtheorem{definition}[theorem]{Definition}
\theoremstyle{remarque}
\newtheorem{remark}[theorem]{Remark}
\theoremstyle{example}
\newtheorem{example}[theorem]{Example}
\numberwithin{equation}{section}
\begin{document}
\begin{center}\Large\textbf{On fibering compact manifold over the circle}
\end{center}
\begin{center}
 {\bf  Ameth \, Ndiaye}\footnote{\tiny\bf{
 Universit\'e\, Cheikh \,Anta \,Diop, \,Dakar/\,D\'epartement de Math\'ematiques(FASTEF)\\Email:\,ameth1.ndiaye@ucad.edu.sn\\
 }}
\end{center}

{\bf Keywords:} foliation, fibering, Lie group, manifold, compact.
\\

\begin{abstract}
In this paper, we show that any compact manifold that carries a $SL(n,\mathbb{R})$-foliation is fibered on the circle $S^1$.
\end{abstract}

\section{Introduction}

\begin{definition}
A codimension $n$ \textbf{foliation} $\mathcal{F}$ on a $(n+m)$-manifold $M$ is given by an open cover $\{U_i\}_{i\in I}$ and submersions
$f_i:U_i\longrightarrow T$ over an $n$-dimensional manifold $T$ and, for $U_i\cap U_j\neq\emptyset$,
a diffeomorphism $\gamma_{ij}:f_i(U_i\cap U_j)\longrightarrow f_j(U_i\cap U_j)$ such that $f_j=\gamma_{ij}\circ f_i$.
\end{definition}

 We say that $\{U_i,f_i,T,\gamma_{ij}\}$ is a foliated cocycle defining $\mathcal{F}$.\\
A \textbf{transverse structure} to $\mathcal{F}$ is a geometric structure on $T$ invariant by the local diffeomorphisms
$\gamma_{ij}$. We say that $\mathcal{F}$ is a \textbf{Lie $G$-foliation}, if $T$ is a Lie group $G$ and $\gamma_{ij}$
are restrictions of left translations on $G$. Such foliation can also be defined by a $1$-form $\omega$ on
$M$ with values in the Lie algebra $\mathcal{G}$ such that:\\
i) $\omega_x:T_xM\longrightarrow\mathcal{G}$ is surjective for every $x\in M$,\\
ii) $d\omega+\frac{1}{2}[\omega,\omega]=0$.\\

If $\mathcal{G}$ is Abelian, $\omega$ is given by $n$ linearly independant closed scalar $1$-forms $\omega_1,...\omega_n$.\\

In the general case, the structure of a Lie foliation on a compact manifold, is given by the following theorem due to E. F\'edida \cite{2}:\\

Let $\mathcal{F}$ be a Lie $G$-foliation on a compact manifold $M$. Let $\widetilde{M}$ be the universal covering of $M$ and  $\widetilde{\mathcal{F}}$ the lift of $\mathcal{F}$ to $\widetilde{M}$. Then there exist a homomorphism $h : \pi_1(M)\longrightarrow G$ and a locally trivial fibration $D:\widetilde{M}\longrightarrow G$ whose fibres are the leaves of $\widetilde{\mathcal{F}}$ and such that, for every $\gamma\in\pi_1(M)$, the following
diagram is commutative:\\

where the first line denotes the deck transformation of $\gamma\in\pi_1(M)$ on $\widetilde{M}$.\\

The group $\Gamma=h(\pi_1(M))$ (which is a subgroup of $G$) is called the holonomy group of $\mathcal{F}$ although the holonomy of each leaf is trivial. The fibration : $D:\widetilde{M}\longrightarrow G$ is called the developing map of $\mathcal{F}$.

\section{Lie-foliation with transverse group $SL(n,\mathbb{R})$}
In the first we want to decompose the group $SL(n,\mathbb{R})$ such that the group $GA$ is one of the factor.\\

Let us compute an explicite example. Let $GA$ be the Lie group of affine transformations $x\in\mathbb{R}\longmapsto ax+b\in\mathbb{R}$,
where $b\in\mathbb{R}$ and $a\in]0;+\infty[$.
It can be embedded in the group $SL(2,\mathbb{R})$ as follows:
$$\left(
    \begin{array}{c}
      x\longmapsto ax+b \\
      a>O \\
    \end{array}
  \right)
  \in GA\longmapsto \frac{1}{\sqrt{a}}\left(
                                                                \begin{array}{cc}
                                                                  a & b \\
                                                                  0 & 1 \\
                                                                \end{array}
                                                              \right)\in SL(2,\mathbb{R}$$
There exist \cite{4} a manifold $M$ equipped with a Lie $SL(2,\mathbb{R})$-foliation $\mathcal{F}$
with $GA$ as the closure of its holonomy group.
Then, the basic cohomology of $\mathcal{F}$ is the cohomology of differential forms on $SL(2,\mathbb{R})$ invariant by $GA$.
The quotient $SL(2;\mathbb{R})/GA$ is diffeomorphic to
the circle $S^1$.\\

\begin{example}
Let $\mathcal{F}_o$ the Lie $GA$-foliation on a compact manifold $M_o$. We suppose that $\mathcal{F}_o$ can't be obtain by inverse image of a homogenous foliation.
The projection $p:SL(2,\mathbb{R}\longrightarrow S^1$ have a section given by the decomposition $SL(2,\mathbb{R})\cong GA\times S^1$. Let $D_o:\widetilde{V}_o\longrightarrow GA$ the
the developing map of $\mathcal{F}_o$.\\

 The map $D:\widetilde{V}_o\times S^1\longrightarrow SL(2,\mathbb{R})$,$(\widetilde{x},y)\longmapsto D_o\widetilde{x}.\sigma(y)$
 is local trivial fibration, these fibers define a Lie $SL(2,\mathbb{R})$-foliation on $\widetilde{V}_o\times S^1$ and induce a Lie $SL(2,\mathbb{R})$-foliation $\mathcal{F}$ on the manifold
 $V=\widetilde{V}_o\times S^1$ which is not conjugate to a homogenous foliation.
 \end{example}
 \begin{theorem}[Tischler]
 If there exists on a compact manifold $M$ a closed differential form without singularities, then $M$ is fibered on the circle.
 \end{theorem}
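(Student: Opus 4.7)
The plan is to approximate the given closed nonvanishing form by a closed form with integral periods and then integrate it to a submersion onto $S^1$, applying Ehresmann's fibration theorem at the end.

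First I would let $\omega$ be the given closed $1$-form on $M$ with no zeros. Its de Rham class $[\omega]\in H^1(M,\mathbb{R})$ can be written, after fixing a basis of closed $1$-forms $\eta_1,\dots,\eta_k$ representing a $\mathbb{Z}$-basis of the free part of $H^1(M,\mathbb{Z})$, as $\omega=\sum_{i=1}^k c_i\,\eta_i+df$ for real numbers $c_i$ and some smooth $f$. Since $M$ is compact, the rationals are dense in $\mathbb{R}$ and nonvanishing is an open condition in the $C^0$ topology, I can choose rationals $q_i$ arbitrarily close to $c_i$ so that the closed form $\omega':=\sum q_i\eta_i+df$ is still everywhere nonzero. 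After multiplying by a common denominator $N$, the form $\omega'':=N\omega'$ is closed, nonvanishing, and its periods $\int_\gamma \omega''$ lie in $\mathbb{Z}$ for every $\gamma\in H_1(M,\mathbb{Z})$.

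Next I would use the integrality of the periods to produce a map to the circle: fix $x_0\in M$ and define
\[
F:M\longrightarrow \mathbb{R}/\mathbb{Z}=S^1,\qquad F(x)=\int_{x_0}^{x}\omega''\pmod{\mathbb{Z}},
\]
the integral being taken along any path from $x_0$ to $x$. Integrality of the periods makes $F$ well defined on $M$ (independent of the path modulo $\mathbb{Z}$), and smoothness is immediate from the local primitives of $\omega''$. By construction $dF=\omega''$, which is nowhere zero, so $F$ is a submersion onto the $1$-manifold $S^1$.

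Finally I would invoke Ehresmann's theorem: a proper submersion between smooth manifolds is a locally trivial fibration. Since $M$ is compact, $F$ is automatically proper; therefore $F:M\to S^1$ exhibits $M$ as a fibration over the circle, which is what is claimed. The main obstacle in this argument is the approximation step, where one must verify that a sufficiently small $C^0$-perturbation of $\omega$ inside the space of closed forms can be found within an \emph{arbitrarily small} neighborhood of $[\omega]$ in $H^1(M,\mathbb{R})$ while remaining nonvanishing; this rests on the finite dimensionality of $H^1(M,\mathbb{R})$ (which follows from compactness of $M$) together with the density of $\mathbb{Q}^k$ in $\mathbb{R}^k$, and on compactness of $M$ to make "nowhere zero" an open condition on the representative.
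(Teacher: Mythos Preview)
The paper does not supply its own proof of this theorem; it is quoted from Tischler's original note \cite{1} and used as a black box in the proof of Theorem~2.7. Your argument is correct and is essentially Tischler's classical proof: perturb the closed nowhere-vanishing $1$-form within its cohomology class plus a small rational shift (using the finite dimensionality of $H^1(M,\mathbb{R})$ and the compactness of $M$ to keep it nonvanishing), clear denominators to obtain integral periods, integrate to a well-defined smooth map $M\to S^1$ which is a submersion since its differential is the form itself, and conclude by Ehresmann's theorem. There is nothing to compare against in the paper, and no gap in your outline.
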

\begin{proposition}[H.Dathe]
A compact manifold that carry a Lie $SL(2,\mathbb{R})$-foliation is fiber on the circle $S^1$.
\end{proposition}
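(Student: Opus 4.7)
\emph{Proof plan.} The strategy is to build a closed, nowhere-vanishing scalar $1$-form on $M$ and then invoke Tischler's theorem to get the fibration over $S^{1}$.

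Using the decomposition recalled just above the statement, the Iwasawa decomposition $SL(2,\mathbb{R})=K\cdot GA$ with $K=SO(2)\cong S^{1}$ exhibits $SL(2,\mathbb{R})$ as diffeomorphic to $GA\times S^{1}$ and provides a smooth quotient map $q:SL(2,\mathbb{R})\to GA\backslash SL(2,\mathbb{R})\cong S^{1}$ which is, by construction, invariant under left translations by $GA$. Pulling back the angular form yields a closed, nowhere-vanishing $1$-form $\eta:=q^{*}(d\theta)$ on $SL(2,\mathbb{R})$ satisfying $L_{g}^{*}\eta=\eta$ for every $g\in GA$.

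F\'edida's theorem supplies a developing map $D:\widetilde{M}\to SL(2,\mathbb{R})$ and a holonomy homomorphism $h:\pi_{1}(M)\to SL(2,\mathbb{R})$. Since $D$ is a locally trivial fibration, $\widetilde{\eta}:=D^{*}\eta$ is closed and nowhere-vanishing on $\widetilde{M}$. For each deck transformation $\gamma$, F\'edida's commutative diagram gives $\gamma^{*}\widetilde{\eta}=D^{*}L_{h(\gamma)}^{*}\eta$; combined with the left $GA$-invariance of $\eta$ established above, $\widetilde{\eta}$ descends to a closed nowhere-vanishing form $\bar{\eta}$ on $M$ as soon as $h(\pi_{1}(M))\subset GA$. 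Tischler's theorem then applies to $\bar{\eta}$ (after a $C^{0}$-small perturbation of $\bar{\eta}$ to make its periods rationally commensurable, if need be) and delivers the announced fibration $M\to S^{1}$.

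The main obstacle is the reduction to the case $h(\pi_{1}(M))\subset GA$. I would attack this by inspecting the short list of closed connected subgroups of $SL(2,\mathbb{R})$ up to conjugacy---namely $\{e\}$, $K$, $A$, $N$, $GA$, and $SL(2,\mathbb{R})$---and arguing case by case that either the closure $\overline{h(\pi_{1}(M))}$ already lies inside (a conjugate of) $GA$, in which case the construction above applies verbatim, or the compactness of $M$ combined with the structure of F\'edida's fibration forces $M$ to split as a product $M_{o}\times S^{1}$ built from a Lie $GA$-foliation $\mathcal{F}_{o}$ on $M_{o}$ in the manner of the example preceding the statement; the desired fibration over $S^{1}$ is then immediate from the second factor. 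Handling the potentially dense case $\overline{h(\pi_{1}(M))}=SL(2,\mathbb{R})$ uniformly with the other cases is the delicate point.
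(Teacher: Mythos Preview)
The paper does not actually prove this proposition; it is quoted from Dathe's thesis. The argument the paper supplies is for the generalisation (Theorem~2.7), and specialised to $n=2$ it runs as follows: use the diffeomorphism $SL(2,\mathbb{R})\cong SO(2)\times\mathbb{R}^{2}$, apply Proposition~2.6 to the factor $G_{2}=\mathbb{R}^{2}$ to obtain a Lie $\mathbb{R}^{2}$-foliation on $M$, observe that for an abelian transverse group the defining $\mathcal{G}$-valued $1$-form splits into closed scalar $1$-forms without singularities, and invoke Tischler. No hypothesis on the holonomy image is needed at any stage, because the projection onto the abelian factor is applied \emph{before} one asks anything to descend.

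Your route is genuinely different and, as you yourself flag, incomplete. The form $\eta=q^{*}d\theta$ you build on $SL(2,\mathbb{R})$ is only invariant under left translations by $GA$, not by all of $SL(2,\mathbb{R})$; indeed $\mathfrak{sl}(2,\mathbb{R})$ is perfect, so there is \emph{no} nonzero closed left-invariant $1$-form on $SL(2,\mathbb{R})$. Consequently $\widetilde\eta=D^{*}\eta$ descends to $M$ only when $h(\pi_{1}(M))$ lies in (a conjugate of) $GA$, and there is no a priori reason for this: the holonomy group of a Lie $G$-foliation can be dense in $G$. Your proposed case analysis does not close this gap---in the dense case $\overline{h(\pi_{1}(M))}=SL(2,\mathbb{R})$ there is no conjugate of $GA$ containing the holonomy, and the ``product $M_{o}\times S^{1}$'' fallback you sketch goes in the wrong direction (the Example preceding the statement \emph{constructs} an $SL(2,\mathbb{R})$-foliation from a $GA$-foliation, it does not show every $SL(2,\mathbb{R})$-foliation arises that way).

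In short: the paper's mechanism, via Proposition~2.6, is precisely what lets one bypass any restriction on the holonomy by passing to an abelian quotient factor first; your construction stays inside $SL(2,\mathbb{R})$ and therefore cannot avoid the holonomy obstruction.
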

Our aim is to generalize this proposition to Lie $SL(n,\mathbb{R})$-foliation. Before that we have
\begin{proposition}
We have the decomposition $SL(n,\mathbb{R})\cong T^{n-1}\times G$ where $T^{n-1}$ is the maximal tore of $SL(n,\mathbb{R})$ identify by the subgroup of diagonal matrices.
And $G$ is Lie group such that $Lie(G)=\bigoplus_{i\neq j}<E_{ij}>$ and $dim G=n^2-n$. Moreover we have:
$$[E_{ij},E_{kl}]=0\,\,if\,\,i\neq l\,\,and\,\, j\neq k$$

$$[E_{ij},E_{jl}]=E_{il}\,\,if\,\, i\neq l$$

$$[E_{ij},E_{ki}]=-E_{kj}\,\,if\,\, k\neq j$$

$$[E_{ij},E_{ji}]=E_{ii} - E_{jj}$$
\end{proposition}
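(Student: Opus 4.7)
My plan is to reduce the claim to a short Lie-algebra computation and then to promote the result to the group level via a standard global decomposition of $SL(n,\mathbb{R})$.

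First, I will verify the four bracket identities by direct calculation on the elementary matrices $E_{ij}$ (with a $1$ in position $(i,j)$ and zeros elsewhere). The product rule $E_{ij}E_{kl}=\delta_{jk}E_{il}$ gives $[E_{ij},E_{kl}]=\delta_{jk}E_{il}-\delta_{li}E_{kj}$, and the four stated cases correspond exactly to the four possible coincidence patterns among the indices $(i,j)$ and $(k,l)$. This part is routine.

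Next, I will record the vector-space splitting at the Lie-algebra level. Let $\mathfrak{h}\subset\mathfrak{sl}(n,\mathbb{R})$ be the Cartan subalgebra of traceless diagonal matrices, of dimension $n-1$, and set $\mathfrak{m}=\bigoplus_{i\neq j}\langle E_{ij}\rangle$, of dimension $n^{2}-n$. Then $\mathfrak{sl}(n,\mathbb{R})=\mathfrak{h}\oplus\mathfrak{m}$ as vector spaces, and the dimensions add up to $n^{2}-1$ as required. The identity component of the subgroup of diagonal matrices of determinant $1$ integrates $\mathfrak{h}$ and is the $T^{n-1}$ appearing in the statement.

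Third, I will promote this splitting to the group level by invoking the Iwasawa decomposition $SL(n,\mathbb{R})=KAN$, with $K=SO(n)$, $A$ the positive diagonal matrices of determinant $1$, and $N$ the unipotent upper-triangular matrices. The multiplication map $A\times(K\times N)\To SL(n,\mathbb{R})$, $(a,(k,u))\mapsto aku$, is a global diffeomorphism; taking $T^{n-1}=A$ and setting $G:=K\times N$ (either with its product Lie-group structure or with the structure transported from the submanifold $KN\subset SL(n,\mathbb{R})$) gives the asserted factorisation, and the dimension count $\tfrac{n(n-1)}{2}+\tfrac{n(n-1)}{2}=n^{2}-n$ matches.

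The main obstacle lies precisely in this last step. The off-diagonal subspace $\mathfrak{m}$ is \emph{not} a Lie subalgebra of $\mathfrak{sl}(n,\mathbb{R})$: the fourth relation $[E_{ij},E_{ji}]=E_{ii}-E_{jj}$ takes values in $\mathfrak{h}$, not in $\mathfrak{m}$. Consequently $G$ cannot be realised as a closed Lie subgroup of $SL(n,\mathbb{R})$ whose Lie algebra is $\mathfrak{m}$ with the inherited bracket. The isomorphism $SL(n,\mathbb{R})\cong T^{n-1}\times G$ must therefore be understood either as a diffeomorphism of smooth manifolds, or after projecting the ambient bracket onto $\mathfrak{m}$ along $\mathfrak{h}$ to define a modified bracket on $\mathfrak{m}$. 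Pinning down which interpretation is intended, and checking that the resulting $G$ is the same object used in the fibration argument that follows, is where the real work of the proof will lie.
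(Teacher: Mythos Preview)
Your proposal is correct and in fact more careful than the paper's own argument, but it takes a different route. The paper proceeds as follows: it lets the diagonal torus act by conjugation on $\mathfrak{sl}(n,\mathbb{R})$, records the resulting root-space decomposition $\mathcal{G}=\mathcal{H}\oplus\bigoplus_{i\neq j}\langle E_{ij}\rangle$, and then simply \emph{asserts} that ``using this splitting we then can decompose $SL(n,\mathbb{R})\cong T\times G$''; it computes $\dim\mathcal{H}=n-1$ by writing down an explicit basis $Y_2,\dots,Y_n$ of $\mathcal{H}$, deduces $\dim G=n^{2}-n$ by subtraction, and declares the bracket formulas to be ``a simple calculation using matrix product''. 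No global decomposition theorem is cited; the passage from the Lie-algebra splitting to the group-level product is left unjustified.

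Your invocation of the Iwasawa decomposition $KAN$ is therefore a genuine addition: it supplies the global diffeomorphism the paper merely asserts, and it is exactly what is used downstream (the subsequent remark in the paper rewrites the conclusion as $SL(n,\mathbb{R})\cong SO(n)\times\mathbb{R}^{n(n+1)/2-1}$, which is precisely Iwasawa). Your closing observation is also on point and goes beyond the paper: since $[E_{ij},E_{ji}]=E_{ii}-E_{jj}\in\mathfrak{h}$, the span $\mathfrak{m}=\bigoplus_{i\neq j}\langle E_{ij}\rangle$ is not a Lie subalgebra, so the clause ``$Lie(G)=\mathfrak{m}$'' cannot mean that $G$ is a subgroup of $SL(n,\mathbb{R})$ with that induced bracket. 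The paper does not confront this; indeed its following remark identifies $G$ with $SO(n)\times SO(n)$, whose Lie algebra is not $\mathfrak{m}$ at all. Reading the proposition as a statement about a smooth product decomposition, as you suggest, is the only consistent interpretation, and your Iwasawa argument proves exactly that.
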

\begin{proof}
Let $SL(n,\mathbb{R})$ the real special linear group and $T$ the maximal tore of $SL(n,\mathbb{R})$ identify with the subgroup of diagonal matrices. We denote by $X(T)$ the
group of morphism $T\longrightarrow \mathbb{R^{\times}}$.\\
Then $T$ act on the Lie algebra $\mathcal{G}$ of $SL(n,\mathbb{R})$ by conjugaison and we have
$$\mathcal{G}=\mathcal{H}\oplus\bigoplus_{i\neq j}\lambda E_{ij},$$ $\lambda\in\mathbb{R}$, where $\mathcal{H}$ is the subset of diagonal matrices of vanishing trace.\\
Using this splitting we then can decompose $SL(n,\mathbb{R})\cong T\times G$ such that $Lie T=\mathcal{H}$ and $Lie(G)=\bigoplus_{i\neq j}<E_{ij}>$,
$E_{ij}$ being the $n\times n$ matrix where the element in the ith line and the jth culumb is equal to 1 and the other elements are vanish.\\
Let $Y=(a_{ij}),i,j=1,...,n$ in $\mathcal{H}$, thus we have
$$\sum_i^na_{ii}=0\Rightarrow a_{11}=-\sum_{i\neq 1}a_{ii}$$ then
$$Y=\sum_{i\neq 1}a_{ii}Y_i$$ where $Y_i=(b_{kl})$ is the matrix with $b_{11}=-1$, $b_{kk}=1$, $k\neq 1$ and $b_{kl}=0$ for $k\neq l$.

We can easily note that the $(Y_i) , i=2,...,n$ are also linearily independant, so $\mathcal{H} = < Y_i, i=2,...,n> $ and then $dim\mathcal{H}=n-1$.
This imply than $dim T=n-1$, therefore $dim G=(n^2-1)-(n-1)=n^2-n$.\\
And by a simple calculation using matrix product, we have the value of the Lie bracket $[E_{ij},E_{kl}]$, this finish then the proof.
\end{proof}
\begin{remark}
The group $G$ in the previous proposition can be identify with the group $SO(n)\times SO(n)$ and the maximal tore $T$ is isomorphic to $\mathbb{R}^{\frac{n(n+1)}{2}-1}/SO(n)$ and then we have
$$SL(n,\mathbb{R})\cong SO(n)\times \mathbb{R}^{\frac{n(n+1)}{2}-1}$$
\end{remark}
\begin{proposition}
Let $\mathcal{F}$ be a Lie $G$-foliation on a compact manifold $M$, with $G=G_1\times G_2$. There exists a Lie
$G_i$-foliation $\mathcal{F}_i$ on $M$ induced by the foliation $\mathcal{F}$.
\end{proposition}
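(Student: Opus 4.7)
The most efficient route is to use the $1$-form characterization of a Lie foliation. By hypothesis, $\mathcal{F}$ is given by a $\mathcal{G}$-valued $1$-form $\omega$ on $M$ where $\mathcal{G}$ is the Lie algebra of $G=G_1\times G_2$, satisfying (i) pointwise surjectivity and (ii) the Maurer--Cartan equation $d\omega+\tfrac{1}{2}[\omega,\omega]=0$. Since $G=G_1\times G_2$ as Lie groups, we have a direct sum decomposition of Lie algebras $\mathcal{G}=\mathcal{G}_1\oplus\mathcal{G}_2$ with $[\mathcal{G}_1,\mathcal{G}_2]=0$. Let $p_i:\mathcal{G}\to\mathcal{G}_i$ be the projection, and set $\omega_i=p_i\circ\omega$.

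My plan is to verify directly that each $\omega_i$ defines a Lie $G_i$-foliation $\mathcal{F}_i$ on $M$. For axiom (i), surjectivity of $(\omega_i)_x:T_xM\to\mathcal{G}_i$ follows immediately from surjectivity of $\omega_x$ composed with the surjection $p_i$. For axiom (ii), I would apply $p_i$ to the Maurer--Cartan equation satisfied by $\omega$; writing $\omega=\omega_1+\omega_2$ and expanding the bracket gives
$$[\omega,\omega]=[\omega_1,\omega_1]+2[\omega_1,\omega_2]+[\omega_2,\omega_2],$$
and because $[\mathcal{G}_1,\mathcal{G}_2]=0$, projecting to $\mathcal{G}_i$ yields exactly $d\omega_i+\tfrac{1}{2}[\omega_i,\omega_i]=0$. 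This produces the desired Lie $G_i$-foliation $\mathcal{F}_i$, whose leaves contain the leaves of $\mathcal{F}$ (so $\mathcal{F}_i$ is the coarsening of $\mathcal{F}$ whose transverse invariant retains only the $G_i$-coordinate).

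As a consistency check, I would re-derive the same foliations through F\'edida's structure theorem: the developing map $D:\widetilde{M}\to G$ and the holonomy morphism $h:\pi_1(M)\to G$ compose with the projection $\pi_i:G\to G_i$ to give a submersion $D_i=\pi_i\circ D:\widetilde{M}\to G_i$ and a homomorphism $h_i=\pi_i\circ h$. The equivariance $D(\gamma\cdot\widetilde{x})=h(\gamma)\cdot D(\widetilde{x})$ projects to $D_i(\gamma\cdot\widetilde{x})=h_i(\gamma)\cdot D_i(\widetilde{x})$, so the fibers of $D_i$ define a foliation on $\widetilde{M}$ invariant under deck transformations, which descends to $\mathcal{F}_i$ on $M$ with developing map $D_i$ and holonomy $h_i$. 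I do not anticipate a real obstacle here; the only point that deserves a line of justification is the vanishing $[\mathcal{G}_1,\mathcal{G}_2]=0$, which is what makes the Maurer--Cartan equation split cleanly along the two factors and is the structural reason the statement is true for a direct product (and would fail for a non-trivial extension).
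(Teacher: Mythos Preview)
Your proposal is correct. Your primary argument, projecting the $\mathcal{G}$-valued Maurer--Cartan form along $p_i:\mathcal{G}\to\mathcal{G}_i$ and using $[\mathcal{G}_1,\mathcal{G}_2]=0$ to see that the structure equation splits, is a genuinely different route from the paper's. The paper works entirely on the developing-map side: it composes $D:\widetilde{M}\to G$ with the group projection $p_i:G\to G_i$ and observes that the resulting simple foliation on $\widetilde{M}$ passes to the quotient $M$ --- which is exactly your ``consistency check'' paragraph. Your $1$-form argument is local and does not invoke F\'edida's theorem, so it never uses the compactness of $M$; the paper's argument is shorter in context because F\'edida has already been recalled and the equivariance of $D$ under $\pi_1(M)$ makes the descent immediate. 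Both approaches rest on the same structural fact about direct products: the projections $p_i$ are compatible with the relevant structure (Lie bracket on the algebra side, left translations on the group side), which is precisely what would fail for a non-trivial extension, as you note.
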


\begin{proof}
Let $G_1$ and $G_2$ be two Lie groups and $\mathcal{F}$ be a Lie $G$-foliation on a compact manifold $M$, where $G=G_1\times G_2$.\\ 
If $D$ is the developing map of $\mathcal{F}$ on the universal cover $\widetilde{M}$ of $M$, then the simple foliation
defined by $p_i\circ D$ (where $p_i$, $i=1,2$ is the projection of $G$ on $G_i$, $i=1,2$), pass in quotient and induces a foliation $\mathcal{F}_i$ on $M$.
\end{proof}

\begin{theorem}
A compact manifold that carry a Lie $SL(n,\mathbb{R})$-foliation fiber on the circle $S^1$.
\end{theorem}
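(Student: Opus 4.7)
The plan is to reduce the statement, via the two propositions just established, to a direct invocation of Tischler's theorem. First, using the decomposition $SL(n,\mathbb{R}) \cong T^{n-1} \times G$ from the preceding proposition, I would apply the factor-foliation proposition with $G_1 = T^{n-1}$ and $G_2 = G$. This produces on the compact manifold $M$ an induced Lie $T^{n-1}$-foliation $\mathcal{F}_1$ from $\mathcal{F}$.

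Next, I would exploit the fact that the Lie algebra of $T^{n-1}$ is $\mathcal{H}$, the space of trace-zero diagonal matrices, which is abelian. By the observation recalled in the introduction, any abelian Lie $\mathcal{H}$-foliation of rank $n-1$ on $M$ is described globally by $n-1$ linearly independent closed scalar $1$-forms $\omega_1, \ldots, \omega_{n-1}$. Linear independence is pointwise, which forces each individual $\omega_i$ to be nowhere-vanishing on $M$. Applying Tischler's theorem to any one of them, say $\omega_1$, finishes the argument: a closed nowhere-vanishing $1$-form on the compact manifold $M$ forces $M$ to fiber over $S^1$.

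The only point in this short chain that really needs checking is the passage from ``$\mathcal{F}_1$ is a Lie $T^{n-1}$-foliation'' to ``$\mathcal{F}_1$ is defined on $M$ by $n-1$ independent closed scalar $1$-forms''. One must verify that the $\mathcal{H}$-valued $1$-form describing the pulled-back foliation on $\widetilde{M}$ descends to $M$; this follows from the $\pi_1(M)$-equivariance built into F\'edida's theorem together with the fact that left translations on the abelian group $T^{n-1}$ preserve the closed scalar $1$-forms in play. Beyond this small verification, no step is a genuine obstacle: all the structural work has been absorbed into the decomposition of $SL(n,\mathbb{R})$ and into the factor-foliation proposition, and the $SL(2,\mathbb{R})$ case of H.~Dathe is recovered as the particular instance $n=2$.
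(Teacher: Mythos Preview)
Your proposal is correct and follows the same overall strategy as the paper: split $SL(n,\mathbb{R})$ as a product with an abelian factor, apply the factor-foliation proposition to obtain an abelian Lie foliation on $M$, read off linearly independent closed scalar $1$-forms, and invoke Tischler. The difference lies only in which decomposition is used. You take $SL(n,\mathbb{R})\cong T^{n-1}\times G$ from the proposition and project onto the abelian torus factor $T^{n-1}$, obtaining $n-1$ closed $1$-forms; the paper instead uses the remark's identification $SL(n,\mathbb{R})\cong SO(n)\times\mathbb{R}^{\frac{n(n+1)}{2}-1}$, peels off an $\mathbb{R}^2$ factor, and obtains two closed $1$-forms from the induced Lie $\mathbb{R}^2$-foliation. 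Your route has the minor advantage of relying only on the decomposition actually proved in the proposition rather than the one asserted in the remark, and you make explicit the descent of the $\mathcal{H}$-valued form from $\widetilde{M}$ to $M$, a point the paper leaves implicit. Conversely, the paper's choice of $\mathbb{R}^2$ makes the abelian structure entirely transparent since the factor is already a vector group. Either way, a single nowhere-vanishing closed $1$-form suffices for Tischler, so both arguments conclude identically.
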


\begin{proof}
Let $M$ be a compact manifold with a Lie $SL(n,\mathbb{R})$-foliation. We have also
$$SL(n,\mathbb{R})\cong SO(n)\times \mathbb{R}^{\frac{n(n+1)}{2}-1}$$
$$SL(n,\mathbb{R})\cong SO(n)\times \mathbb{R}^{\frac{n(n+1)}{2}-3}\times\mathbb{R}^{2}$$
 Now we take $G=SL(n,\mathbb{R})$, $G_1=SO(n)\times \mathbb{R}^{\frac{n(n+1)}{2}-3}$ and $G_2=\mathbb{R}^2$\\
 so using the proposition, the Lie $SL(n,\mathbb{R})$-foliation induces a Lie
 $\mathbb{R}^2$-foliation on $M$. Since $\mathbb{R}^2$ is abelian the structures equations of the Lie $\mathbb{R}^2$-foliation are closed 1-forms on $M$, then using the Tischler theorem, $M$ is a fibration over the circle.
 \end{proof}

\end{document}